\newtheorem{thm}{Theorem}[section]
\newtheorem{lem}[thm]{Lemma}
\theoremstyle{definition}
\newtheorem{defn}[thm]{Definition}
\theoremstyle{remark}
\newtheorem{rem}[thm]{Remark}
\numberwithin{equation}{section}
\begin{document}

\title{Rational Groups and a Characterization of a Class of Permutation Groups}%
\author{Cecil Andrew Ellard}%
\address{Bloomington, Indiana}%
\email{cellard@ivytech.edu}%

\thanks{}%


\begin{abstract}
We prove that a finite group is rational if and only if it has a set of permutation characters which separate conjugacy classes. It follows from this that a finite group is rational if and only if it has a representation as a permutation group in which any two elements fixing the same number of letters are conjugate.
\end{abstract}
\maketitle
\section{Rational Groups}
A group $G$ is said to be a $\textit{rational group}$ if for all $g$ $\in$ $G$ and for all complex irreducible characters $\chi$ of G, $\chi(g)$ is a  rational real number. The word ``irreducible" is not necesssary in the definition; a finite group is rational if and only if all complex characters (not just the irreducible ones) are rational valued. Since character values for any finite group are known to be complex algebraic integers, it follows that the character values of a rational group are actually rational integers, and so for a rational group, we have the unusual situation that the character table consists entirely of integers.\\

The finite Coxeter groups of type $A_{n}, B_{n}, D_{n}, E_{6}, E_{7}, E_{8}, F_{4},$ and $G_{2}$ (which include the finite symmetric groups $Sym(n)$), are rational groups. Rod Gow [4] has shown that only $Z_{2}$, $Z_{3}$, and $Z_{5}$ can arise as composition factors of a finite solvable rational group.  Walter Feit and Gary Seitz [3] have shown that the only non-abelian groups which occur as composition factors of a finite rational group are $Alt(n)$, $PSp(4,3)$, $Sp(6,2)$, $O^{+}(8,2)'$, $PSL(3,4)$, and $PSU(4,3)$. It follows from this that the only non-trivial finite simple rational groups are $Z_{2}$ of order 2, $Sp(6,2)$ of order $1,451,520=2^{9}\cdot 3^{4}\cdot 5 \cdot 7$, and $O^{+}(8,2)'$ of order $174,182,400=2^{12}\cdot 3^{5}\cdot 5^{2}\cdot 7$ [1]. One condition equivalent to rationality for finite groups is the following:

\begin{lem} A finite group $G$ is rational if and only if for each $g$ $\in$ $G$, all generators of the group $\langle g \rangle$ are conjugate in $G$.
\end{lem}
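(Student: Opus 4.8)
The plan is to exploit the Galois action on character values. First I would fix $g \in G$, set $n = |\langle g\rangle|$ and $\zeta = e^{2\pi i /n}$, and record the two standard facts to be used: (i) for any character $\chi$ of $G$ the value $\chi(g)$ lies in $\mathbb{Q}(\zeta)$, since the eigenvalues of a representing matrix $\rho(g)$ are $n$-th roots of unity; and (ii) for $k$ coprime to $n$, the automorphism $\sigma_k \in \mathrm{Gal}(\mathbb{Q}(\zeta)/\mathbb{Q})$ determined by $\sigma_k(\zeta)=\zeta^k$ satisfies $\sigma_k(\chi(g)) = \chi(g^k)$. I would also recall that the generators of $\langle g\rangle$ are precisely the $g^k$ with $\gcd(k,n)=1$, and that $\mathrm{Gal}(\mathbb{Q}(\zeta)/\mathbb{Q})$ consists of exactly these $\sigma_k$, so that the two "sets of $k$'s" in play are the same.

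For the direction ``generators conjugate $\Rightarrow$ rational'': given an irreducible character $\chi$ and any $g \in G$, each $g^k$ with $\gcd(k,n)=1$ is a generator of $\langle g\rangle$, hence conjugate to $g$ by hypothesis, so $\chi(g^k)=\chi(g)$ because $\chi$ is a class function. By (ii) every element of $\mathrm{Gal}(\mathbb{Q}(\zeta)/\mathbb{Q})$ fixes $\chi(g)$, and therefore $\chi(g) \in \mathbb{Q}$. As $g$ and $\chi$ were arbitrary, $G$ is rational.

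For the converse ``rational $\Rightarrow$ generators conjugate'': given $g$ and a generator $g^k$ of $\langle g\rangle$, rationality gives $\chi(g)\in\mathbb{Q}$ for every irreducible $\chi$, so $\sigma_k$ fixes $\chi(g)$, and (ii) yields $\chi(g^k)=\sigma_k(\chi(g))=\chi(g)$. Thus $g$ and $g^k$ take the same value under every irreducible character of $G$; since the irreducible characters form a basis for the space of class functions (equivalently, the indicator function of a conjugacy class is a linear combination of irreducible characters), the indicator functions of the classes of $g$ and of $g^k$ coincide, so $g$ and $g^k$ are conjugate in $G$.

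The only step that needs genuine care is identity (ii). I would prove it by diagonalizing $\rho(g)$ — legitimate because $\rho(g)$ annihilates $X^n-1$, which is separable — writing $\chi(g)=\epsilon_1+\cdots+\epsilon_d$ with each $\epsilon_j^n=1$, and observing that $\sigma_k$ acts on the $n$-th roots of unity by $\epsilon\mapsto\epsilon^k$, so that $\sigma_k(\chi(g))=\sum_j\epsilon_j^k=\operatorname{tr}(\rho(g)^k)=\chi(g^k)$. Everything else — the identification $\mathrm{Gal}(\mathbb{Q}(\zeta_n)/\mathbb{Q})\cong(\mathbb{Z}/n\mathbb{Z})^\times$, and the fact that an element of a cyclotomic field fixed by the whole Galois group is rational — is standard, so I do not anticipate a real obstacle; the content is entirely in setting up this Galois/eigenvalue dictionary correctly.
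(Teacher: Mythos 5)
Your proof is correct and complete. The paper itself offers no argument for this lemma, deferring to the references [5] and [2]; the Galois-action proof you give --- diagonalizing $\rho(g)$, using $\sigma_k(\chi(g))=\chi(g^k)$ for $\gcd(k,n)=1$, and invoking the fact that irreducible characters separate conjugacy classes for the converse --- is precisely the standard argument found in those sources, so you have simply supplied the proof the paper omits.
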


(See, for example, [5] or [2].)\\

If a finite group $G$ acts on a finite set $\Omega$, we can define a function $\theta :G \rightarrow \mathbb{C}$ by $\theta (g)=|Fix(g)|=|\{\alpha \in \Omega :g \alpha =\alpha\}|$.  $\theta (g)$ counts the number of fixed points of $g$ in $\Omega$. This function is a character of $G$ because if we  let $V$ be a complex vector space with basis $\Omega$, and extend the action of $G$ linearly to $V$ we obtain a $G$-module affording $\theta$. In particular, if $H$ is a subgroup of $G$, then $G$ acts transitively on the set of left cosets $xH$ of $H$ in $G$ by left multiplication. If we let  $1_{H}^{0}: G \rightarrow \mathbb{Z}$ denote the characteristic function of $H$, defined on $G$ by having value 1 on $H$ and 0 off $H$, then the character afforded by this action which counts the number of fixed cosets is also the induced character $1_{H}^{G}$ defined for $x \in G$ by 

\begin{displaymath}
  1_{H}^{G}(x) =\frac{1}{H} \sum_{g \in G} 1_{H}^{0}(g^{-1}xg).
\end{displaymath}\\

Notice that $1_{H}^{G}(x)=0$ if and only if $x$ is not conjugate to any element of $H$.

\begin{defn}Let $G$ be a group, $S$ a non-empty set, and let $\{ f_{i} : i \in I \}$ be a set of class functions, $f_{i}:G \rightarrow S$ (that is, for every $i$, if $g$ and $h$ are conjugate in $G$, then $f_{i}(g)=f_{i}(h)$, so that each $f_{i}$ is constant on conjugacy classes of $G$). We say that the set $\{ f_{i} : i \in I \}$ of functions $ \textit{separates conjugacy classes}$ of $G$ if whenever $g$ and $h$ are in distinct conjugacy classes of $G$, then there exists an $i$ such that $f_{i}(g) \neq f_{i}(h)$.
\end{defn}

\begin{rem} Note that a permutation character of $G$ separates conjugacy classes of $G$ if and only if the permutation representation affording that character has the property that any two elements of $G$ fixing the same number of letters are conjugate in $G$.
\end{rem}

\begin{rem} Let $G=Sym(3)$. $G$ acts by conjugation on its three involutions, and the permutation character of this action separates conjugacy classes, because the identity fixes three letters, the conjugacy class of 2-cycles fix one letter, and the conjugacy class of 3-cycles fix no letters.
\end{rem}

\begin{rem} Let $G=Sym(4)$. $G$ has exactly two classes $2_{1}$ and $2_{2}$ of involutions composed of the odd and even involutions, respectively. Let $\Omega$ be the union of these two classes. Thus, $|\Omega|=9$. $G$ acts by conjugation on $\Omega$, and the permutation character $\theta$ of this action separates conjugacy classes. We can explicitly describe $\theta$ as $\theta (g)=$ the number of involutions in $C_{G} (g)$. (In contrast, notice that the natural permutation character of $Sym(4)$ on $4$ letters does not separate conjugacy classes, since both a $4$-cycle and its square fix no letters, but are not conjugate in $Sym(n)$).
\end{rem}

The purpose of this note is to prove the following:\\

\begin{thm} Let $G$ be a finite group, and let $S=\{g_{1}, g_{2}, g_{3}, ..., g_{k}\}$ be a complete set of representatives of the conjugacy classes of $G$. Then the following are equivalent: \newline
\newline
(1) $G$ is a rational group.\newline
\newline
(2) The permutation characters $\{1_{<g_{i}>}^{G}: i=1,2,...,k\}$ separate conjugacy classes \newline 
\indent $\hskip 3pt$ of $G$. \newline
\newline
(3) $G$ has a finite collection $\mathcal{H}$ of subgroups  such that the permutation characters \newline 
\indent $\hskip 3pt$ $\{1_{H}^{G}: H \in \mathcal{H}\}$ separate conjugacy classes of $G$.\newline
\newline
(4) $G$ has a permutation character which separates the conjugacy classes of $G$. \newline
\indent $\hskip 3pt$ (That is, G has a permutation representation such that any two elements of $G$ \newline
\indent $\hskip 3pt$  fixing the same number of letters are conjugate in $G$.)

\end{thm}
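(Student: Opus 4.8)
The plan is to establish the cycle of implications $(1)\Rightarrow(2)\Rightarrow(3)\Rightarrow(4)\Rightarrow(1)$, of which three links are short. For $(2)\Rightarrow(3)$ one takes $\mathcal H=\{\langle g_i\rangle : 1\le i\le k\}$. For $(4)\Rightarrow(1)$, suppose $G$ acts on a finite set $\Omega$ with permutation character $\theta$ separating conjugacy classes; given $g\in G$ and any generator $g'$ of $\langle g\rangle$, the equality $\langle g\rangle=\langle g'\rangle$ makes each of $g,g'$ a power of the other, so $Fix(g)=Fix(g')$ and hence $\theta(g)=\theta(g')$; since $\theta$ separates classes, $g$ and $g'$ are conjugate in $G$, and Lemma~1.1 then gives that $G$ is rational. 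For $(3)\Rightarrow(4)$ I would pack finitely many permutation characters into one by a base-$N$ device: write $\mathcal H=\{H_1,\dots,H_m\}$ and put $N=|G|+1$, so that $0\le 1_{H_t}^G(x)\le[G:H_t]\le|G|<N$ for all $t$ and all $x\in G$; letting $G$ act on the disjoint union of $N^{t-1}$ copies of the coset space $G/H_t$ for $t=1,\dots,m$ produces a permutation character $\theta=\sum_{t=1}^{m} N^{t-1}\,1_{H_t}^G$, and $\theta(x)=\theta(y)$ forces $1_{H_t}^G(x)=1_{H_t}^G(y)$ for every $t$ by uniqueness of base-$N$ digits, so $x$ and $y$ are conjugate because $\{1_{H_t}^G\}$ separates classes.

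The real content is $(1)\Rightarrow(2)$. First I would record the identity
\begin{displaymath}
1_{\langle g_i\rangle}^G(x)=\frac{|C_G(x)|\cdot\bigl|\{\,y\in\langle g_i\rangle : y\ \text{is conjugate to}\ x\,\}\bigr|}{|\langle g_i\rangle|},
\end{displaymath}
obtained by grouping the sum defining $1_{\langle g_i\rangle}^G$ according to the value of $g^{-1}xg$, each conjugate of $x$ being attained by exactly $|C_G(x)|$ elements $g$. Assume now that $G$ is rational. By Lemma~1.1, inside the cyclic group $H=\langle g_i\rangle$ all elements of a fixed order $d\mid|H|$ are conjugate in $G$ (they are precisely the generators of the unique subgroup of $H$ of order $d$), whereas elements of different orders are never conjugate; hence, if $x$ is conjugate to some element of $H$, the set in the numerator has exactly $\varphi(|x|)$ elements, and otherwise it is empty, so $1_H^G(x)=0$ exactly when $x$ is conjugate to no element of $H$. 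In particular $1_{\langle g_i\rangle}^G(g_i)>0$ for every $i$.

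Finally, given non-conjugate $g,h\in G$, choose $i$ and $j$ with $g$ conjugate to $g_i$ and $h$ conjugate to $g_j$, and look at the two characters $1_{\langle g_i\rangle}^G$ and $1_{\langle g_j\rangle}^G$: we have $1_{\langle g_i\rangle}^G(g)>0$ and $1_{\langle g_j\rangle}^G(h)>0$. If $1_{\langle g_i\rangle}^G(h)=0$ then $1_{\langle g_i\rangle}^G$ separates $g$ from $h$; otherwise $h$ is conjugate to an element of $\langle g_i\rangle$, and that element cannot be a generator of $\langle g_i\rangle$ --- a generator would be conjugate to $g_i$, hence to $g$, by Lemma~1.1 --- so the order of $h$ is strictly less than the order of $g_i$, i.e.\ than the order of $g$. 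By the symmetric argument, if $1_{\langle g_j\rangle}^G(g)\ne0$ then the order of $g$ is strictly less than the order of $h$. These two conclusions are incompatible, so at least one of $1_{\langle g_i\rangle}^G$ and $1_{\langle g_j\rangle}^G$ takes different values on $g$ and $h$; thus $\{1_{\langle g_i\rangle}^G : i=1,\dots,k\}$ separates conjugacy classes, which is $(2)$. The step I expect to be the main obstacle is exactly this last comparison of orders: exhibiting one permutation character that is nonzero at $g$ is immediate, but one must invoke rationality a second time --- to force an element of $\langle g_i\rangle$ conjugate to $h$ to have order less than $|g_i|$ --- precisely to eliminate the possibility that neither candidate character distinguishes the pair $g,h$.
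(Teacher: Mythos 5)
Your proof is correct, and three of the four links ($(1)\Rightarrow(2)$, $(2)\Rightarrow(3)$, $(3)\Rightarrow(4)$) run essentially parallel to the paper's Lemmas 1.7--1.9: the same dichotomy on whether $h$ is conjugate into $\langle g\rangle$ followed by an order comparison via Lemma 1.1 (you phrase it as a symmetric contradiction, the paper concludes directly that $g$ cannot be conjugate into $\langle h\rangle$; same idea), the same trivial choice of $\mathcal H$, and the same base-$K$ packing trick --- your version is in fact slightly tidier, since taking $N=|G|+1$ bounds the digits themselves rather than their differences, and you justify explicitly that $\theta=\sum N^{t-1}1_{H_t}^G$ is a genuine permutation character by acting on a disjoint union of copies of the coset spaces, a point the paper leaves implicit. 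The genuine divergence is $(4)\Rightarrow(1)$. The paper (Lemma 1.10) argues character-theoretically, in the style of Isaacs' proof of the Burnside--Brauer theorem: for an irreducible $\chi$ it sets up the integer linear system $\sum_i \theta^{j-1}(g_i)\,x_i=|G|(\theta^{j-1},\chi)$, whose Vandermonde determinant is nonzero because $\theta$ separates classes, and reads off that the unique solution $x_i=\chi(g_i^{-1})|Class(g_i)|$ is rational. You instead observe that a permutation action satisfies $Fix(g)=Fix(g')$ whenever $\langle g\rangle=\langle g'\rangle$, so $\theta(g)=\theta(g')$ for every generator $g'$ of $\langle g\rangle$; separation then makes all such generators conjugate, and Lemma 1.1 finishes. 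Your argument is more elementary and exploits the permutation structure directly, which is exactly what hypothesis (4) provides; the paper's argument is heavier but proves something stronger --- any integer-valued character separating classes already forces rationality, no action needed --- and it is this character-theoretic formulation that feeds the ``Artin's Theorem for rational groups'' interpretation in Remark 1.11. Both are complete proofs of the theorem as stated.
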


We will prove the theorem using the following four lemmas:

\begin{lem} Let $G$ be a rational group, and let $S=\{g_{1}, g_{2}, g_{3}, ..., g_{k}\}$ be a complete set of representatives of the conjugacy classes of $G$. Then permutation characters $\{1_{<g_{i}>}^{G}: i=1,2,...,k\}$ separate conjugacy classes of $G$.
\end{lem}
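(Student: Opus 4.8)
The plan is to show that, in a rational group, merely the \emph{support} pattern of the characters $1_{\langle g_i\rangle}^{G}$ already determines the conjugacy class of an element; the values are not needed. The two ingredients are the remark recorded above that $1_{H}^{G}(x)=0$ if and only if $x$ is not conjugate to any element of $H$, and Lemma~1.2: in a rational group all generators of a cyclic subgroup $\langle g\rangle$ are conjugate in $G$.

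For $x\in G$ put $T(x)=\{\,i : 1_{\langle g_i\rangle}^{G}(x)\neq 0\,\}$, i.e. $i\in T(x)$ exactly when $x$ is conjugate to some element of $\langle g_i\rangle$. \textbf{Step 1.} I would first observe that $\min\{\,|g_i| : i\in T(x)\,\}=|x|$. For the upper bound, if $g_i$ is the representative of the class of $x$ then $x$ is conjugate to $g_i\in\langle g_i\rangle$, so $i\in T(x)$ and $|g_i|=|x|$; for the lower bound, any $i\in T(x)$ has $x$ conjugate to some $w\in\langle g_i\rangle$, so $|x|=|w|$ divides $|g_i|$, forcing $|g_i|\ge |x|$. \textbf{Step 2 (where rationality enters).} I claim there is a \emph{unique} index $i^{\ast}=i^{\ast}(x)\in T(x)$ with $|g_{i^{\ast}}|=|x|$, and that this $g_{i^{\ast}}$ is conjugate to $x$. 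Indeed, if $i\in T(x)$ and $|g_i|=|x|$, then $x$ is conjugate to an element $w\in\langle g_i\rangle$ with $|w|=|x|=|g_i|=|\langle g_i\rangle|$, so $w$ generates $\langle g_i\rangle$; by Lemma~1.2, $w$ is conjugate to $g_i$, hence $x$ is conjugate to $g_i$. Since the $g_i$ lie in pairwise distinct conjugacy classes, at most one such $i$ can occur, giving uniqueness.

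\textbf{Step 3 (conclusion).} Suppose $g,h\in G$ satisfy $1_{\langle g_i\rangle}^{G}(g)=1_{\langle g_i\rangle}^{G}(h)$ for every $i$. Then $T(g)=T(h)$; by Step~1, $|g|=|h|$; and by Step~2, $i^{\ast}(g)$ and $i^{\ast}(h)$ are both the unique index of the common set $T(g)=T(h)$ whose cyclic subgroup has order $|g|$, so $i^{\ast}(g)=i^{\ast}(h)$. Therefore $g$ is conjugate to $g_{i^{\ast}(g)}=g_{i^{\ast}(h)}$, which is conjugate to $h$; hence $g\sim h$. This is precisely the contrapositive of the assertion that $\{1_{\langle g_i\rangle}^{G}:i=1,\dots,k\}$ separates conjugacy classes.

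I do not expect a serious obstacle here: the only real content is the insight of Step~2, namely that one should track supports rather than values, and that rationality upgrades ``$x$ is conjugate into $\langle g_i\rangle$ and attains the full order of $\langle g_i\rangle$'' to ``$x$ is conjugate to $g_i$ itself.'' The points requiring a little care are that $S$ is a genuine set of class representatives (used both for the existence of $i^{\ast}$ in Step~1 and for its uniqueness in Step~2), and the harmless passage between a cyclic subgroup and a $G$-conjugate of it, which is legitimate because $1_{(\cdot)}^{G}$ depends on the subgroup only up to conjugacy.
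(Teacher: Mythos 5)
Your proof is correct and rests on exactly the two ingredients the paper's own proof uses --- the observation that $1_{H}^{G}(x)=0$ precisely when $x$ is not conjugate to any element of $H$, and Lemma 1.1 (which you cite as Lemma 1.2) that in a rational group all generators of $\langle g_i\rangle$ are conjugate --- so it is essentially the same argument, just reorganized: the paper handles a given non-conjugate pair $g,h$ by a direct two-case comparison of $1_{\langle g\rangle}^{G}$ and $1_{\langle h\rangle}^{G}$ via orders, while you package the same order comparison as recovering the class of $x$ from the minimal-order index in the support set $T(x)$. No gaps.
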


\begin{proof} Assume that $G$ is a rational group. Let $g$ and $h$ be elements of $G$ which are not conjugate. We want to show that there exists a permutation character $1_{<g_{s}>}^{G}$ in the set $\{1_{<g_{i}>}^{G}: i=1,2,...,k\}$ such that $1_{<g_{s}>}^{G}(g) \neq 1_{<g_{s}>}^{G}(h)$. If $h$ is not conjugate to any element of $<g>$, then we have $1_{<g>}^{G}(g) \neq 0$, while $1_{<g>}^{G}(h) = 0$, so choosing $g_{s}$ conjugate to $g$, we get $1_{<g_{s}>}^{G}=1_{<g>}^{G}$, and we would be done. So assume that $h$ is conjugate to some element $h'$ of $<g>$. Since $g$ and $h'$ are not conjugate, Lemma 1.1 tells us that $h'$ is not a generator of $<g>$, and so the order of $g$ is greater than the order of $h'$, which is the same as the order of $h$.  Therefore, $g$ is not conjugate to any element of $<h>$. Therefore, $1_{<h>}^{G}(g) = 0$, while $1_{<h>}^{G}(h) \neq 0$, and so choosing $g_{s}$ conjugate to $h$, we get $1_{<g_{s}>}^{G}=1_{<h>}^{G}$, which proves the lemma.
\end{proof}

\begin{lem} Let $G$ be a finite group, and let $S=\{g_{1}, g_{2}, g_{3}, ..., g_{k}\}$ be a complete set of representatives of the conjugacy classes of $G$. Assume that the permutation characters $\{1_{<g_{i}>}^{G}: i=1,2,...,k\}$ separate conjugacy classes of $G$. Then $G$ has a finite collection $\mathcal{H}$ of subgroups such that the permutation characters $\{1_{H}^{G}: H \in \mathcal{H}\}$ separate conjugacy classes of $G$.
\end{lem}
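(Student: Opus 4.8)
The plan is to write down the collection $\mathcal{H}$ explicitly, after which $(3)$ is immediate from the hypothesis $(2)$. I would take $\mathcal{H}$ to be the set of cyclic subgroups generated by the chosen class representatives, namely $\mathcal{H} = \{\langle g_i\rangle : i = 1, 2, \ldots, k\}$. Because $G$ is finite it has only finitely many conjugacy classes, so $\mathcal{H}$ is a finite collection of subgroups of $G$; if two of the $\langle g_i\rangle$ coincide as subgroups they are simply listed once, which affects nothing.

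With this choice, the family of permutation characters $\{1_H^G : H \in \mathcal{H}\}$ is exactly the family $\{1_{\langle g_i\rangle}^G : i = 1, 2, \ldots, k\}$ appearing in the hypothesis (after deleting repeated characters, which does not affect the separation property). So the argument reduces to quoting the hypothesis: if $g$ and $h$ lie in distinct conjugacy classes of $G$, then by $(2)$ there is an index $s$ with $1_{\langle g_s\rangle}^G(g) \neq 1_{\langle g_s\rangle}^G(h)$, and since $\langle g_s\rangle \in \mathcal{H}$ this witnesses the separation of $g$ and $h$ within $\{1_H^G : H \in \mathcal{H}\}$. Hence $\mathcal{H}$ has the required property.

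I do not anticipate any obstacle here: this lemma is the trivial step $(2) \Rightarrow (3)$ of the theorem, isolated only so that the proof of the theorem can be assembled as a cycle of implications. The real work in the theorem lies elsewhere — in building a \emph{single} separating permutation character out of a finite separating family (by taking a disjoint union of $G$-sets with carefully chosen multiplicities, so that the combined fixed-point count recovers the whole tuple of individual fixed-point counts), and above all in the converse $(4) \Rightarrow (1)$, where rationality has to be deduced from the existence of one separating permutation character by way of Lemma 1.1.
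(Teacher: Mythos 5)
Your proof is correct and is essentially identical to the paper's: both take $\mathcal{H}=\{\langle g_i\rangle : i=1,\dots,k\}$, observe this is a finite collection of subgroups, and note that the separating family in the hypothesis is then exactly $\{1_H^G : H\in\mathcal{H}\}$. The extra remarks about discarding duplicate subgroups and about where the real work of the theorem lies are accurate but not needed.
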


\begin{proof} By assumption, we can let $\mathcal{H}=\{<g_{i}>: i=1,2,...,k\}$, since this is a finite collecton of subgroups of $G$, and the permutation characters $\{1_{<g_{i}>}^{G}: i=1,2,...,k\}$ separate conjugacy classes of $G$. 
\end{proof}

\begin{lem} Let $G$ be a finite group, and assume that $G$ has a finite collection $\mathcal{H}$ of subgroups such that the permutation characters $\{1_{H}^{G}: H \in \mathcal{H}\}$ separate conjugacy classes of $G$. Then $G$ has a permutation character which separates the conjugacy classes of $G$. That is, $G$ has a permutation representation such that any two elements of $G$ fixing the same number of letters are conjugate in $G$.
\end{lem}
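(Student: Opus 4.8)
The plan is to construct a single permutation character by taking a weighted disjoint union of the coset spaces $G/H$, $H \in \mathcal{H}$, with weights large enough that the resulting character records the entire tuple of values $\bigl(1_H^G(g)\bigr)_{H \in \mathcal{H}}$, much as a base-$B$ numeral records its digits. Some such device is necessary: the plain sum $\sum_{H \in \mathcal{H}} 1_H^G$ is certainly a permutation character, but it need not separate conjugacy classes even when the summands do so collectively, since collective separation of points is not inherited by a sum.

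Concretely, I would write $\mathcal{H} = \{H_1, \dots, H_m\}$ and set $\theta_i = 1_{H_i}^G$. Each $\theta_i$ is afforded by the action of $G$ by left multiplication on the $[G:H_i]$ cosets of $H_i$, so $0 \le \theta_i(g) \le [G:H_i] \le |G|$ for every $g \in G$. I would then fix an integer $B > |G|$, so that $0 \le \theta_i(g) < B$ for all $i$ and all $g$, and form
\[
  \theta = \sum_{i=1}^{m} B^{\,i-1}\,\theta_i .
\]
Since the coefficients $B^{i-1}$ are non-negative integers, $\theta$ is again a permutation character: it is afforded by the action of $G$ on the disjoint union $\Omega$ consisting of $B^{i-1}$ disjoint copies of the coset space $G/H_i$ for each $i$, because the permutation character of a disjoint union of $G$-sets is the sum of the permutation characters of the pieces, and $n$ copies of a $G$-set contribute $n$ times its character.

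It then remains to check that $\theta$ separates conjugacy classes of $G$. Suppose $g, h \in G$ satisfy $\theta(g) = \theta(h)$. Because $0 \le \theta_i(g), \theta_i(h) < B$, the values $\theta_1(g), \dots, \theta_m(g)$ are precisely the base-$B$ digits of the non-negative integer $\theta(g)$, and likewise for $h$; by uniqueness of base-$B$ expansions, $\theta(g) = \theta(h)$ forces $\theta_i(g) = \theta_i(h)$ for every $i$. Since the family $\{1_H^G : H \in \mathcal{H}\}$ separates conjugacy classes of $G$ by hypothesis, $g$ and $h$ must be conjugate in $G$. Hence $\theta$ separates conjugacy classes, and by Remark 1.3 the corresponding permutation representation of $G$ has the property that any two elements fixing the same number of letters are conjugate in $G$. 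The only step that requires any real care is the one flagged above — choosing the weights so that the combined character still separates classes while remaining a genuine permutation character; once that is arranged, the rest is bookkeeping.
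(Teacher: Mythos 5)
Your proof is correct and follows essentially the same strategy as the paper's: form a weighted sum $\sum_i K^{i-1} 1_{H_i}^G$ with a sufficiently large base and recover the individual values $1_{H_i}^G(g)$ by uniqueness of base-$K$ expansions. The only (harmless) difference is that you bound the character values themselves by $|G|$, while the paper bounds the differences $|1_H^G(g_i)-1_H^G(g_j)|$; your explicit remark that the weighted sum is afforded by a disjoint union of copies of the coset spaces is a nice touch the paper leaves implicit.
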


\begin{proof} Let $G$ be a finite rational group and assume that $G$ has a finite collection $\mathcal{H}=\{H_{1}, H_{2}, H_{3}, ..., H_{n}\}$ of subgroups such that the permutation characters $\{1_{H}^{G}: H \in \mathcal{H}\}$ separate conjugacy classes of $G$.  Let $S=\{g_{1}, g_{2}, g_{3}, ..., g_{k}\}$ be a complete set of representatives of the conjugacy classes of $G$. Define $M$ by $M=max\{1, |1_{H}^{G}(g_{i})-1_{H}^{G}(g_{j})|: g_{i}, g_{j} \in S$ and $H \in \mathcal{H} \}$, so $M \in \mathbb{Z}$ and $M \ge  1$. Let $K$ be an integer greater than $M$. We claim the permutation character $\theta = K^{0}1_{H_{1}}^{G}+K^{1}1_{H_{2}}^{G}+...+K^{n-1}1_{H_{n}}^{G}$ separates conjugacy classes of $G$. To see this, let $g$ and $h$ be elements of $G$ and assume that $\theta(g)=\theta(h)$. We want to show that $g$ and $h$ are conjugate in $G$. By definition of $\theta$, 
\\

$(K^{0}1_{H_{1}}^{G}+K^{1}1_{H_{2}}^{G}+...+K^{n-1}1_{H_{n}}^{G})(g)=(K^{0}1_{H_{1}}^{G}+K^{1}1_{H_{2}}^{G}+...+K^{n-1}1_{H_{n}}^{G})(h)$.\\

By the definition of character addition, it follows that\\

$K^{0}1_{H_{1}}^{G}(g)+...+K^{n-1}1_{H_{n}}^{G}(g)=K^{0}1_{H_{1}}^{G}(h)+...+K^{n-1}1_{H_{n}}^{G}(h)$.\\

Since $g$ and $h$ are conjugate to elements $g_{i}$ and $g_{j}$ (respectively) in $S$, it follows that for every $H \in \mathcal{H}$ we have\\

 $\hskip 50pt$ $|1_{H}^{G}(g)-1_{H}^{G}(h)| = |1_{H}^{G}(g_{i})-1_{H}^{G}(g_{j})| \leq M < K$.\\

\noindent Since we have two base-$K$ expansions of a positive integer in which the coefficients differ by less than $K$, it follows that the corresponding coefficients of each are equal, and so $1_{H}^{G}(g)=1_{H}^{G}(h)$ for every $H \in \mathcal{H}$. Since the permutation characters $\{1_{H}^{G}: H \in \mathcal{H}\}$ separate conjugacy classes of $G$, and since they each agree when evaluated at $g$ and $h$, it follows that $g$ and $h$ are conjugate in $G$. This completes the proof of the lemma.
\end{proof}

\begin{lem} Assume that $G$ is a finite group, and that $G$ has a permutation character which separates the conjugacy classes of $G$ (that is, G has a permutation representation such that any two elements of $G$ fixing the same number of letters are conjugate in $G$.) Then $G$ is a rational group.
\end{lem}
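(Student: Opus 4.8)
The plan is to reduce everything to the internal characterization of rationality recorded in Lemma 1.1: a finite group $G$ is rational if and only if, for every $g \in G$, all generators of $\langle g \rangle$ are conjugate in $G$. So I would fix an arbitrary $g \in G$ of order $n$ and an arbitrary integer $a$ with $\gcd(a,n)=1$, so that $g^{a}$ is a typical generator of $\langle g \rangle$, and aim to show that $g$ and $g^{a}$ are conjugate in $G$. Once that is done for all such $g$ and $a$, Lemma 1.1 immediately yields that $G$ is rational.

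The key point is that the separating permutation character provided by the hypothesis simply cannot tell $g$ apart from $g^{a}$. Let $G$ act on a finite set $\Omega$ affording the separating permutation character $\theta$, so that $\theta(x) = |\mathrm{Fix}(x)|$ for every $x \in G$. For any $x$, a point $\alpha \in \Omega$ is fixed by $x$ precisely when it is fixed by every power of $x$, i.e.\ $\mathrm{Fix}(x) = \mathrm{Fix}(\langle x \rangle)$. Since $\langle g^{a} \rangle = \langle g \rangle$, this gives $\mathrm{Fix}(g^{a}) = \mathrm{Fix}(\langle g \rangle) = \mathrm{Fix}(g)$, and hence $\theta(g^{a}) = \theta(g)$.

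Now I would invoke the separating property of $\theta$: since $\theta$ separates conjugacy classes and $\theta(g) = \theta(g^{a})$, the elements $g$ and $g^{a}$ must lie in the same conjugacy class of $G$. As $a$ ranged over all residues coprime to $n$, every generator of $\langle g \rangle$ is conjugate to $g$ (and so the generators are mutually conjugate); and as $g$ was arbitrary, Lemma 1.1 finishes the proof.

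I do not anticipate a genuine obstacle here. The one thing to state carefully is the elementary observation that a permutation character is automatically constant on the set of generators of any cyclic subgroup (via $\mathrm{Fix}(x) = \mathrm{Fix}(\langle x \rangle)$); this is exactly the fact that makes the separating hypothesis strong enough to force the conjugacy demanded by Lemma 1.1. Everything else is just bookkeeping.
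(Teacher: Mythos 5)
Your proof is correct, and it takes a genuinely different route from the paper. You reduce to the internal criterion of Lemma 1.1 and exploit the elementary fact that a permutation character cannot distinguish the generators of a cyclic subgroup: since $\mathrm{Fix}(x)=\mathrm{Fix}(\langle x\rangle)$, one gets $\theta(g^{a})=\theta(g)$ whenever $\langle g^{a}\rangle=\langle g\rangle$, so the separating hypothesis forces $g$ and $g^{a}$ to be conjugate, and Lemma 1.1 finishes. The paper instead proves rationality directly on the level of irreducible characters, by a variation of Isaacs' proof of the Burnside--Brauer theorem: for an arbitrary irreducible $\chi$ it forms the integers $c_{j}=|G|\,(\theta^{j-1},\chi)$, regards them as an integer linear system in the unknowns $\chi(g_{i}^{-1})\,|\mathrm{Class}(g_{i})|$, and notes that the coefficient matrix has nonzero Vandermonde determinant $\pm\prod_{i<j}(\theta(g_{i})-\theta(g_{j}))$ precisely because $\theta$ separates classes, so the unique solution, and hence each $\chi(g_{i})$, is rational. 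Your argument is shorter and more elementary (no inner products or determinants) and makes visible the structural fact that such a permutation representation fuses all generators of each cyclic subgroup; the paper's linear-algebra argument buys something extra that is used in Remark 1.11, namely that the powers $\theta^{0},\dots,\theta^{k-1}$ are linearly independent and span the rational-valued class functions over $\mathbb{Q}$, which underlies the ``Artin's Theorem'' interpretation of Theorem 1.6.
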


\begin{proof} This proof is a variation on the theme of I.M. Isaacs' proof of the Burnside-Brauer Theorem [5]. Let $G$ be a finite group and let $\theta $ be a permutation character which separates the conjugacy classes of $G$. The idea behind the proof is simply that a system of linear equations over the integers which has a unique solution, has a solution consisting of rational numbers. Let $\chi$ be an arbitrary irreducible character of $G$, $\{g_{1}$, $g_{2}$, $g_{3}$, ..., $g_{k}\}$ a complete set of representatives of the conjugacy classes of $G$, and define complex numbers $c_{j}$ by $c_{j}=\sum_{h \varepsilon G}\theta^{j-1}(h)\chi (h^{-1})$, for $j=1,...,k$. Then using the definition of the inner product of characters of $G$, and the fact that $\chi (g_{i}^{-1}) = \overline{\chi (g_{i})}$,  $c_{j}=\sum_{i=1}^k \theta^{j-1}(g_{i})\chi (g_{i}^{-1})|Class(g_{i})|=|G|(\theta^{j-1} ,\chi )$. Consider the following system of $k$ linear equations in the $k$ unknowns $x_{i}$:\\

$ \theta^{j-1}(g_{1}) x_{1}+\theta^{j-1}(g_{2}) x_{2}+...+\theta^{j-1}(g_{k}) x_{k}=c_{j}$ for $j=1,...,k$\\

Since $\theta$ is a character of $G$, the inner products $(\theta^{j-1} ,\chi )$ are integers and therefore $c_{j}$ is an integer for all $j$. Also, since $\theta$ is integer valued, the coefficients of the system are integers. Therfore, we have a system of $k$ linear equations in $k$ unknowns over the integers. The determinant of the system is the Vandermonde determinant $\pm \prod_{i<j}(\theta (g_{i})-\theta (g_{j}))$, which is non-zero, since $\theta$ separates conjugacy classes of $G$. Therefore, the system has the unique rational solution $x_{i}=\chi (g_{i}^{-1})|Class(g_{i})|$ for $i=1,...,k$.  Thus $\chi (g_{j}^{-1})$ is rational, and therefore since $ \overline{\chi (g_{j})}= \chi (g_{j}^{-1})$, it follows that $\overline{\chi (g_{j})}$ is rational, and therefore, $\chi (g_{j})$ is rational too. So $\chi $ is rational valued on the set of representatives of the conjugacy classes. Finally, since $\chi$ is a class function, it is rational on all of $G$. Since $\chi$ was an arbitrary irreducible character of $G$, it follows that $G$ is a rational group.
\end{proof}

\begin{rem}
Let $\mathbb{Q}$ be the set of rational numbers. Recall that Artin's Theorem says that for any finite group, every character can be written as a $\mathbb{Q}$-linear combination of characters induced from representations of cyclic subgroups [5]. By the proof of Theorem 1.6 (or of Lemma 1.10), if $G$ is a finite rational group, and if \newline $\{g_{1}$, $g_{2}$, $g_{3}$, ..., $g_{k}\}$ a complete set of representatives of the conjugacy classes of $G$, then the $\mathbb{Z}$-linear span of $\{1_{\langle g_{i} \rangle}^{G}, i=1,...,k\}$ contains a permutation character, say $\theta $, that separates conjugacy classes of $G$. It follows that the characters $\theta^{0}, \theta^{1}, ...\theta^{k-1}$ are linearly independent over $\mathbb{Q}$, and therefore every rational valued class function on $G$ can be written as a $\mathbb{Q}$-linear combinaton of the characters $\theta^{0}, \theta^{1}, ...\theta^{k-1}$, and since these  powers of $\theta $ are in the  $\mathbb{Q}$-linear span of $\{1_{\langle g_{i} \rangle}^{G}, i=1,...,k\}$ it follows that every rational valued class function on $G$ can be written as a $\mathbb{Q}$-linear combinaton of $\{1_{\langle g_{i} \rangle}^{G}, i=1,...,k\}$. In particular, every character of a rational group $G$ can be written as a $\mathbb{Q}$-linear combination of characters induced from $\it{identity}$ representations of cyclic subgroups. In this sense, Theorem 1.6 may be viewed as a type of ``Artin's Theorem"  for finite rational groups.\\

\end{rem}

\end{document}